\newtheorem{lem}{Lemma}
\newtheorem{thm}[lem]{Theorem}
\newtheorem{cor}[lem]{Corollary}
\begin{document}
\title{ On the regular 2-connected 2-path Hamiltonian graphs }
\author{Xia Li, \quad Weihua Yang\footnote{Corresponding author. E-mail: ywh222@163.com; yangweihua@tyut.edu.cn
(W.~Yang).} \\
\small Department of Mathematics,  Taiyuan University of
Technology,  Taiyuan 030024,  China\\}
\maketitle {\flushleft\bf Abstract:} {\small  A graph $G$ is $l$-path Hamiltonian if every path of length not exceeding $l$ is contained in a Hamiltonian cycle. It is well known that  a 2-connected, $k$-regular graph $G$ on at most $3k-1$ vertices is edge-Hamiltonian if for every edge $uv$ of $G$, $\{u,v\}$ is not a cut-set. Thus $G$ is 1-path Hamiltonian if $G\setminus \{u,v\}$ is connected for every edge $uv$ of $G$. Let $P=uvz$ be a 2-path of a 2-connected, $k$-regular graph $G$ on at most $2k$ vertices. In this paper, we show that there is a Hamiltonian cycle containing the 2-path $P$ if $G\setminus V(P)$ is connected. Therefore, the work implies a condition for  a 2-connected, $k$-regular graph to be 2-path Hamiltonian. An example shows that the $2k$ is almost sharp, i.e.,  the number is at most $2k+1$.}

 {\flushleft\bf Keywords}:
Hamiltonian cycle; $l$-path Hamiltonian; $k$-regular graph; edge-Hamiltonian

\section{Introduction}
 All graphs mentioned in this paper are finite simple  graphs. Standard graph theory notation and terminology not explained in this paper, we refer the reader to~\cite{bondy}.
 A \emph{Hamiltonian cycle} in a graph $G$ is a cycle containing all the vertices of $G$, and a graph with a Hamiltonian cycle is called \emph{Hamiltonian}. Dirac's Theorem~\cite{Dirac} states that every $n$-vertex graph with minimum degree at least $\frac{n}2$ is Hamiltonian.

 One particular classic subarea on Hamiltonian graph theory is about Hamiltonian cycles containing specified elements of a graph. One of these directions is the study of $l$-path Hamiltonian. A graph $G$ on $n$ vertices is said to be \emph{$l$-path Hamiltonian} if every path of length not exceeding $l$, $1\leqslant l\leqslant n-2$, is contained in a Hamiltonian cycle (i.e., a Hamiltonian graph is 0-path Hamiltonian).  A graph $G$ is said to be \emph{edge-Hamiltonian, or 1-path Hamiltonian} if every edge of $G$ is contained in a Hamiltonian cycle. Kronk in~\cite{Hudson V. Kronk} considered the $l$-path Hamiltonian.

 \begin{thm}[\cite{Hudson V. Kronk}]\label{thm1}
Let $G$ be a graph on $n$ vertices, if $d(a)+d(b)\geqslant n+l$ for every pair of non-adjacent vertices $a$ and $b$, then $G$ is $l$-path Hamiltonian.
 \end{thm}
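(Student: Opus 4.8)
The plan is to follow the classical rotation/crossing argument behind Ore's theorem, with two modifications tailored to the prescribed path: a counting refinement that guarantees the reconstructed cycle keeps the path intact, and a connectivity bound that disposes of the non-spanning case.

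First I would make two reductions. If $G$ is complete there is nothing to prove, so assume $G$ has a pair of non-adjacent vertices and the hypothesis is non-vacuous. Since the hypothesis with parameter $l$ implies the same hypothesis with every smaller parameter $l'\le l$, it suffices to prove that under $d(a)+d(b)\ge n+l$ every path of length \emph{exactly} $l$ lies on a Hamiltonian cycle; the full statement then follows by applying this to each $l'\le l$. I would also record a connectivity fact: if $S$ is a cut set and $x,y$ lie in different components of $G-S$, then $d(x)+d(y)\le 2|S|+(n-|S|)-2=n+|S|-2$, so the hypothesis forces $|S|\ge l+2$; hence $G$ is $(l+2)$-connected.

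Next, fix a path $P=v_0v_1\cdots v_l$ and let $Q=w_0w_1\cdots w_t$ be a longest path of $G$ containing $P$ as a consecutive subpath, say $P=w_iw_{i+1}\cdots w_{i+l}$. Maximality gives $N(w_0),N(w_t)\subseteq V(Q)$. If $w_0w_t\in E(G)$, then $Q+w_0w_t$ is a cycle on $V(Q)$ containing $P$. Otherwise $d(w_0)+d(w_t)\ge n+l$, and I would run the crossing argument: with $A=\{j:w_0w_{j+1}\in E(G)\}$ and $B=\{j:w_jw_t\in E(G)\}$, both subsets of $\{0,1,\dots,t-1\}$, one has $|A\cap B|\ge d(w_0)+d(w_t)-t\ge n+l-t\ge l+1$. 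The key point is that the $l$ indices $F=\{i,\dots,i+l-1\}$ corresponding to the edges of $P$ number only $l<l+1\le|A\cap B|$, so some $j\in A\cap B$ avoids $F$; the crossing $w_0\cdots w_j w_t w_{t-1}\cdots w_{j+1}w_0$ then deletes only the non-$P$ edge $w_jw_{j+1}$ and yields a cycle $C$ on $V(Q)$ that still contains $P$.

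Finally I would show $Q$ is spanning, so that $C$ is the desired Hamiltonian cycle. If not, connectivity gives a vertex $x\notin V(Q)$ adjacent to some $w_r\in V(C)$. If $w_r$ is not an interior vertex of $P$, then at least one of its two $C$-edges is not a $P$-edge; deleting it opens $C$ into a Hamiltonian path of $C$ that contains $P$ and ends at $w_r$, and appending $x$ produces a path longer than $Q$ still containing $P$, contradicting maximality. The remaining possibility is that every edge leaving $V(Q)$ meets the interior vertices $\{v_1,\dots,v_{l-1}\}$ of $P$; but then this set, of size $l-1$, would separate $V(Q)\setminus\{v_1,\dots,v_{l-1}\}$ (which contains $w_0$) from the nonempty outside, contradicting $(l+2)$-connectivity. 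Hence $Q$ spans $G$ and $C$ is a Hamiltonian cycle through $P$. The main obstacle is precisely the bookkeeping that keeps the prescribed path inside the rebuilt cycle, handled by the inequality $|A\cap B|>l$, together with ruling out the degenerate non-spanning configuration, for which the $(l+2)$-connectivity bound is exactly what is needed.
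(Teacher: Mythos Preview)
The paper does not prove this theorem: it is quoted as a result of Kronk and used only as a black box (specifically, to dispose of the range $n\le 2k-2$ at the end of the proof of Theorem~\ref{thm2}). There is therefore no argument in the paper to compare your proposal against. For what it is worth, your proof is correct and is essentially the standard one: the key count $|A\cap B|\ge d(w_0)+d(w_t)-t\ge n+l-t\ge l+1$ strictly exceeds the number $l=|F|$ of forbidden crossing positions coming from the edges of $P$, which is exactly why the Ore rotation can be carried out while keeping $P$ intact; and the $(l+2)$-connectivity bound you derive is precisely what is needed to rule out the non-spanning case, since the interior of $P$ contributes only $l-1$ vertices to the putative separator.
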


It is not difficult to see that Kronk's work is sharp. Due to the theorem above, we try to explore such problems on $k$-regular graphs.

Many problems and conjectures on Hamiltonian regular graphs have been investigated by various authors. The problem of determining the values of $k$ for which all 2-connected, $k$-regular graphs on $n$ vertices are Hamiltonian was first suggested by Szekeres (see \cite{B. Jackson}). Jackson in \cite{B. Jackson} showed that every 2-connected, $k$-regular graph on at most $3k$ vertices is Hamiltonian. The strongest result of these works given by Li in \cite{H. Li} is that all 2-connected, $k$-regular graphs, $k\geqslant 14$, on at most $3k+4$ vertices are Hamiltonian except two kinds of well defined families of graphs.

Li in~\cite{Hao Li} showed the following result that under almost the same conditions in \cite{B. Jackson}, the graphs are edge-Hamiltonian.

\begin{thm}[\cite{Hao Li}]\label{thm3}
Let $G$ be a 2-connected, $k$-regular graph on $n\leqslant 3k-1$ vertices, and let $e_{0}=uv$ be any edge of $G$ such that \{u, v\} is not a cut-set, then $G$ has a Hamiltonian cycle containing $e_{0}$.
\end{thm}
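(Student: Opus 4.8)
The plan is to argue by contradiction with a longest-cycle analysis, adapting Jackson's degree-counting argument so that the prescribed edge $e_0=uv$ survives every step. Since $G$ is $2$-connected, every edge of $G$ lies on a cycle, so the family $\mathcal{C}$ of cycles containing $e_0$ is nonempty. Suppose the theorem fails and pick $C\in\mathcal{C}$ of maximum length; by assumption $c:=|V(C)|<n$, so $R:=G-V(C)\neq\varnothing$. Fix a component $H$ of $R$, orient $C$, let $N=N_C(H)$ be the set of attachment vertices (those of $C$ having a neighbour in $H$), put $t=|N|$, and for $x\in N$ write $x^{+}$ for its successor along $C$ and $N^{+}=\{x^{+}:x\in N\}$. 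Since $G$ is $2$-connected, $t\geq 2$; since $\{u,v\}$ is not a cut-set, $H$ stays attached to $C\setminus\{u,v\}$, and this is the feature that will let me protect the edge $uv$.

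First I would reprove the standard longest-cycle facts in this constrained setting. If for some $x\in N$ the successor $x^{+}$ had a neighbour in $H$, inserting an $H$-path between $x$ and $x^{+}$ would produce a cycle longer than $C$; and if two successors $x^{+},y^{+}$ were adjacent, the classical crossing reroute through an $H$-path from $x$ to $y$ would again lengthen $C$. Hence $N$ and $N^{+}$ are disjoint, $N^{+}$ is independent, and no vertex of $N^{+}$ meets $H$, so for any $h\in H$ the set $I=N^{+}\cup\{h\}$ is independent of size $t+1$. The subtle point is that each of these rerouting operations deletes two edges of $C$ (namely $xx^{+}$ and, in the crossing case, $yy^{+}$); to keep $e_0$ on the cycle I must ensure $uv$ is never one of the deleted edges. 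Because $t\geq 2$ there are at least two arcs of $C$ between consecutive attachments, and because $\{u,v\}$ is not a cut-set one can always select the reroute (or the orientation of $C$) so that the detour is spliced into an arc not carrying $uv$; verifying this when $u$ or $v$ is itself an attachment vertex is where most of the work lies.

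Next comes the counting, carried out by adapting Jackson's argument to the independent set $I$. A vertex $h\in H$ has all its neighbours inside $V(H)\cup N$, giving $|V(H)|\geq k-t+1$, while the $t$ successors in $N^{+}$, being pairwise non-adjacent and non-adjacent to $H$, force the arcs of $C$ to be long; summing the degree contributions of $I$ against the available vertices of $V(C)\cup R$ yields a lower bound on $n$. In the purely Hamiltonian setting this is exactly Jackson's bound $n\geq 3k+1$; when the edge $e_0$ must be retained, one of the lengthening operations above is forbidden, so the argument loses a single unit of slack and gives only $n\geq 3k$. Since this already contradicts $n\leq 3k-1$, the cycle $C$ must be Hamiltonian, which proves the theorem.

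The main obstacle, and the step I expect to demand the most care, is precisely the interaction between the reroutings and the protected edge: I must check, case by case, that whenever $u$ or $v$ is an attachment vertex or one of $u^{+},v^{+}$ belongs to $I$, a cycle-lengthening detour avoiding the edge $uv$ is still available, and I must make the loss-of-one-unit-of-slack bookkeeping rigorous so that $n\geq 3k$ genuinely follows. The hypothesis that $\{u,v\}$ is not a cut-set is exactly what supplies the alternative attachments needed for these detours, so threading it correctly through the case analysis is the heart of the proof.
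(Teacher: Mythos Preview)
The paper does not contain a proof of this statement. Theorem~\ref{thm3} is quoted from Li's paper~\cite{Hao Li} as a known result and is used only as background and motivation; the proof in Section~2 is entirely devoted to Theorem~\ref{thm2} (the $2$-path version). So there is no ``paper's own proof'' to compare your proposal against.

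That said, your outline is in the right spirit for how such results are proved. A longest cycle through $e_0$, the hopping/successor machinery to obtain an independent set $N^{+}\cup\{h\}$, and a Jackson-style degree count are indeed the standard ingredients. But be aware that your sketch is still a plan, not a proof: the assertion that the edge-protection costs exactly one unit of slack (turning Jackson's $n\geq 3k+1$ into $n\geq 3k$) is stated without justification, and the case analysis you flag --- what happens when $u$ or $v$ is an attachment vertex, or when $u^{+}$ or $v^{+}$ lands in $N^{+}$ --- is precisely where the real work lies. Li's original argument also needs more than the naive successor count; one typically has to analyse the segment structure between attachments more carefully (much as the present paper does for the $2$-path case via the sets $S_i$, the $\psi$-connectedness relation, and the hopping lemma). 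If you want to reconstruct the proof, you should consult~\cite{Hao Li} directly rather than this paper.
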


In other words, if $G$ is a 2-connected, $k$-regular graph on at most $3k-1$ vertices, and $G\setminus V(P)$ is connected for every path $P$ of length 1, then $G$ is 1-path Hamiltonian.

By Theorem~\ref{thm1}, we have that 2-connected, $k$-regular graphs on at most $2k-2$ vertices are 2-path Hamiltonian. Naturally, what else can we say about the 2-path Hamiltonian regular graphs?  In this paper, we are going to prove the following.

\begin{thm}\label{thm2}
Let $G$ be a 2-connected, $k$-regular graph on $n\leqslant 2k$ vertices, and let $P=uvz$ be any path of  $G$ such that $\{u, v, z\}$ is not a cut-set, then $G$ has a Hamiltonian cycle containing $P$.
\end{thm}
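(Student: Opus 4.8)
The plan is to argue by contradiction, mimicking the longest‑cycle arguments of Jackson and of Li (Theorem~\ref{thm3}) but carried out relative to the fixed sub‑path $P=uvz$. Suppose $G$ has no Hamiltonian cycle containing $P$. Since $G$ is $2$‑connected, $G-v$ is connected and hence contains a $u$--$z$ path; appending the edges $vz$ and $uv$ produces a cycle through $P$, so among all cycles of $G$ containing $P$ we may choose one, $C$, of maximum length. (The cases $n\le 4$, and $k\le 2$, are trivial, so we may assume $|V(C)|\ge 4$.) Fix an orientation of $C$ in which the three consecutive vertices $u,v,z$ occur in this order, and set $D=G-V(C)$; by hypothesis $C$ is not Hamiltonian, so $D\neq\emptyset$. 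Choose a component $F$ of $D$ and let $A=N_{C}(V(F))=\{a_{1},\dots,a_{t}\}$, listed in the cyclic order of $C$. Since $G$ is $2$‑connected, $t\ge 2$, and since $k$‑regularity forces every $f\in V(F)$ to satisfy $|N_{F}(f)|+|N_{C}(f)|=k$ with $N_{C}(f)\subseteq A$, we obtain the basic inequality $|V(F)|+|A|\ge k+1$.

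The heart of the matter is a package of local restrictions forced by the maximality of $C$: no two vertices of $A$ are consecutive on $C$ (so each open arc $L_{i}$ of $C$ strictly between $a_{i}$ and $a_{i+1}$ is nonempty); writing $a_{i}^{+}$ for the successor of $a_{i}$, no $a_{i}^{+}$ is adjacent to $V(F)$ and $\{a_{1}^{+},\dots,a_{t}^{+}\}$ is an independent set; and, more generally, the standard ``jump'' rerouting cannot shorten $C$. Each of these is proved by extracting, from a hypothetical bad configuration, a longer cycle obtained by replacing an arc of $C$ lying strictly between two attachments by a detour through $F$. In the classical setting this is automatic; here one must \emph{additionally} check that the replaced arc contains neither of the edges $uv$, $vz$ of $P$. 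This is exactly where the hypothesis that $\{u,v,z\}$ is not a cut‑set enters: it says $G-\{u,v,z\}$ is connected, so (using $|V(C)|\ge 4$) every component of $D$, in particular $F$, has an attachment in $V(C)\setminus\{u,v,z\}$, and one can always hang the detour at, or reroute through, such an ``interior'' attachment without disturbing $P$. The genuinely delicate case is $v\in A$: then $u=v^{-}$ and $z=v^{+}$ cannot be shown non‑adjacent to $F$, and the arc incident with $v$ cannot be rerouted; this is handled by working with the interior attachment(s) guaranteed above, or by checking directly that such a configuration already contains a longer cycle through $P$.

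Granting these restrictions, the proof closes by a count in the spirit of Jackson's. One bounds each arc $L_{i}$ from below using that $a_{i}^{+}$ has all $k$ of its neighbours outside $V(F)$ and outside the set of the other successors, so that the arcs must collectively ``absorb'' roughly $k$ vertices; summing over $i=1,\dots,t$ and adding $|A|=t$ makes $|V(C)|=t+\sum_{i}|L_{i}|$ large, and combining this with $|V(F)|+|A|\ge k+1$ yields $|V(C)|+|V(F)|>n$, contradicting $|V(C)|+|V(F)|\le n\le 2k$. It is worth remarking that the threshold $2k$ is precisely the Ore bound for $G-v$: on its $n-1\le 2k-1$ vertices every non‑adjacent pair has degree sum $\ge 2k-2\ge (n-1)-1$, so $G-v$ has \emph{some} Hamiltonian path; the content of the theorem is that this path can be pinned to the prescribed ends $u$ and $z$, and the example of the abstract shows $2k$ cannot be relaxed beyond $2k+1$.

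The main obstacle, then, is not the regularity count --- with only $2k$ vertices the slack is wider than in Jackson's $3k$ estimate --- but the bookkeeping imposed by $P$: every rotation or rerouting in the classical longest‑cycle toolkit must be audited to confirm it leaves the edges $uv$ and $vz$ intact, and the configurations in which $v$ (or, through it, $u$ or $z$) attaches to a component of $G-V(C)$ must be treated separately, using the interior attachment supplied by the non‑cut‑set hypothesis.
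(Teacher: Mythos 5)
Your outline correctly identifies the right objects (a longest cycle $C$ through $P$, a component $F$ of $G-V(C)$, its attachment set $A$ with $|V(F)|+|A|\geqslant k+1$) and the right closing count: if no two vertices of $A$ were consecutive on $C$, no successor $a_i^{+}$ were adjacent to $F$, and the successors formed an independent set, then $d(a_i^{+})\leqslant (n-1)-|V(F)|-(t-1)$ would give $|V(F)|+t\leqslant n-k\leqslant k$, a contradiction. But the proof has a genuine gap exactly at the point you flag and then wave away: each of those three restrictions is proved by a rerouting that deletes one or two specific edges of $C$ (namely $a_ia_i^{+}$ and $a_ja_j^{+}$, or the edge $a_ia_{i+1}$), and when one of the deleted edges is $uv$ or $vz$ the rerouting is simply unavailable. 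Concretely, if $v\in A$ then $u=v^{-}$ and $z=v^{+}$ may also lie in $A$, so $A$ contains consecutive pairs that cannot be separated, and $z$ is a successor that is adjacent to $F$; the interior attachment supplied by the non-cut-set hypothesis lets you reroute \emph{somewhere}, but it does not restore the degree bound on the exceptional successors. Since your final inequality fails by exactly one vertex ($|V(F)|+t\leqslant k$ versus $\geqslant k+1$), there is no slack to absorb even a single exceptional vertex, so ``handled by working with the interior attachment(s)'' is not a proof but a restatement of the difficulty.

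For comparison, the paper does not work with a longest cycle through $P$ in $G$ at all: it subdivides $uv$ and $vz$ with two new vertices $w_1,w_2$ of degree $2$ and takes a longest cycle of the new graph $G_1$ through $w_1$ and $w_2$, so that containing $P$ is automatic and every rerouting need only be audited for whether it drops $w_1$ or $w_2$ (this is the role of the condition $(*)$ there). The price is that $G_1$ has up to $2k+2$ vertices and two vertices of degree $2$, so the clean one-line count above no longer closes; the paper then needs the hopping lemma of Woodall for the isolated-vertex case, a chain of lemmas (Lemmas~\ref{lem1}--\ref{lem4}) about maximal paths in the largest component of $G_1-C_1$ for the case $2\leqslant |R_1'|\leqslant k-1$, and a separate hand analysis for $k=3,4$. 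The fact that the published argument resorts to this heavier machinery is itself evidence that the exceptional configurations around $\{u,v,z\}$ cannot be dismissed in a sentence; to salvage your route you would need to classify and eliminate the cases $v\in A$, $u\in A$ with $v=u^{+}$, and $z\in A^{+}$ explicitly, and at present your proposal does not do so.
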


The following corollary  follows from Theorem~\ref{thm3} and Theorem~\ref{thm2}.

\begin{cor}\label{thm4}
Let $G$ be a 2-connected, $k$-regular graph on at most $2k$ vertices, if $G\setminus V(P)$ is connected for every path $P$ of length at most 2, then $G$ is 2-path Hamiltonian.
\end{cor}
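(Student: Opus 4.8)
The plan for Corollary~\ref{thm4} is to feed paths of each admissible length into Theorems~\ref{thm3} and~\ref{thm2} after checking their hypotheses. Let $G$ be $2$-connected, $k$-regular, on $n\leqslant 2k$ vertices, with $G\setminus V(P)$ connected for every path $P$ having at most two edges; recall that $2$-path Hamiltonian means every path of length $0$, $1$ or $2$ lies in a Hamiltonian cycle. A length-$0$ path is a single vertex, so the length-$0$ case is exactly that $G$ is Hamiltonian, which holds because $\delta(G)=k\geqslant n/2$ (Dirac~\cite{Dirac}). A length-$1$ path is an edge $e_0=uv$: the hypothesis applied to it says $G\setminus\{u,v\}$ is connected, i.e.\ $\{u,v\}$ is not a cut-set, and since $n\leqslant 2k\leqslant 3k-1$ Theorem~\ref{thm3} gives a Hamiltonian cycle through $e_0$. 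A length-$2$ path is $P=uvz$: the hypothesis says $\{u,v,z\}$ is not a cut-set, so Theorem~\ref{thm2} gives a Hamiltonian cycle through $P$. That is all; at this level there is no real obstacle, and the whole weight of the corollary rests on Theorem~\ref{thm2}.

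So the proof I would actually have to produce is that of Theorem~\ref{thm2}, and the natural route is the longest-cycle argument, reading the statement as: $G-v$ contains a Hamiltonian $u$--$z$ path (adjoining $v$ and the edges $uv,vz$ to such a path closes it to a Hamiltonian cycle through $P$). First, a cycle through $P$ exists at all, since $G$ being $2$-connected makes $G-v$ connected, hence it has a $u$--$z$ path. Now take a longest cycle $C$ of $G$ containing $P$ and suppose it is not Hamiltonian; put $R=V(G)\setminus V(C)\neq\emptyset$, $r=|R|$, $c=|V(C)|$, so $c+r=n\leqslant 2k$. Orient $C$; after relabelling, $u,v,z$ occur consecutively with $v^-=u$ and $v^+=z$, so that $uv$ and $vz$ are the only two edges of $C$ that no competing longer cycle through $P$ may delete. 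I would then set up the usual insertion and crossover sublemmas in this ``frozen-edge'' setting: if a vertex of $R$ is adjacent to the two endpoints of a non-frozen edge of $C$, insert it and contradict maximality; if the two ends of an $R$-path are adjacent, respectively, to the two endpoints of a non-frozen edge of $C$, the same insertion works; and the standard longer-range crossover variants hold similarly. These sublemmas force the neighbours on $C$ of each vertex of $R$, and the attachment set $A$ on $C$ of a component of $G[R]$ (which has $|A|\geqslant 2$ by $2$-connectivity), to be spread out around $C$; counting the $k$ edges leaving each vertex of $R$ against this structure should then collide with $c+r=n\leqslant 2k$. The non-cut-set hypothesis enters precisely here: it keeps $G-v$ connected and lets the reroutings near $v$ go through --- without it, $v$ can be forced to be the unique transit vertex between two halves of $G$ while $u$ and $z$ both lie in one half, and then no Hamiltonian cycle contains $P$ at all.

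The step I expect to be the real obstacle is exactly this handling of the two frozen edges. The problem is equivalent to finding a Hamiltonian $u$--$z$ path in $G-v$, but $G-v$ is not regular --- the $k$ neighbours of $v$, and in particular $u$ and $z$, drop to degree $k-1$ --- so none of the available $2$-connected $k$-regular Hamiltonicity results (Jackson~\cite{B. Jackson}, Li~\cite{H. Li}), nor Theorem~\ref{thm3}, can be quoted as a black box for $G-v$, and the extremal count has to be redone by hand, tracking where degree is lost and ensuring no crossover ever breaks $uv$ or $vz$. The count is tightest in the borderline range $n\in\{2k-1,2k\}$ --- Kronk's Theorem~\ref{thm1} already covers $n\leqslant 2k-2$ --- so the near-extremal configurations there, such as $K_{k,k}$ when $n=2k$, have to be checked one at a time; that same tightness is what makes $2k$ essentially best possible, as the promised example on $2k+1$ vertices shows.
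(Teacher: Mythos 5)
Your first paragraph is exactly the paper's proof of Corollary~\ref{thm4}: the length-$0$ case is Dirac, the length-$1$ case is Theorem~\ref{thm3} (valid since $n\leqslant 2k\leqslant 3k-1$), and the length-$2$ case is Theorem~\ref{thm2}, so the corollary is correct and takes the same route as the paper. The rest of your proposal is an (unexecuted) sketch of Theorem~\ref{thm2} itself, which is not needed to prove the corollary; for what it is worth, the paper proves that theorem not by your frozen-edge longest-cycle plan in $G-v$ but by subdividing the two edges of $P$ with new vertices $w_1,w_2$ and applying the hopping lemma to the resulting graph on at most $2k+2$ vertices.
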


We shall present an example which shows that the best bound of Theorem~\ref{thm4} is at most $2k+1$. Let $H_{i}$, $i=1,2$, be a graph which is obtained from $K_{k+1}$ by deleting one edge $e_{i}=a_{i}b_{i}$. We can construct a 2-connected, $k$-regular graph $G$ on 2k+2 vertices from two disjoint copies $H_{1}$ and $H_{2}$ by adding $a_{1}a_{2}$ and $b_{1}b_{2}$.  There is a 2-path in $G$ that is not contained in any Hamiltonian cycle of $G$. Thus, the problems on regular 2-connected $l$-path Hamiltonian graphs with $n$ vertices are interesting in $2k-l\leqslant n\leqslant 2k+1$.

\section{Proof of Theorem~\ref{thm2}}
The proof of Theorem~\ref{thm2} is divided into two cases. We first consider the case of $k\geqslant5$ and we prove it by using the classic hopping lemma (\cite{Woodall}, Lemma 12.3).  In the end, we consider the cases of $k=3$ and $k=4$.

 We fist assume $k\geqslant5$. Let $G$  be a 2-connected, $k$-regular graph on $n\leqslant 2k$ vertices, and let $P=uvz$ be a path of  $G$ such that $\{u, v, z\}$ is not a cut-set. We define a new graph $G_{1}$ by inserting two vertices $w_{1}$ and $w_{2}$ on the edges $e_{1}=uv$ and $e_{2}=vz$ of $P$ respectively. Then we have $G_{1}=(G-\{e_{1}, e_{2}\})\cup\{w_{1}, w_{2}\}\cup\{uw_{1}, w_{1}v, vw_{2}, w_{2}z\}$, $P_{1}=u w_{1} v w_{2} z$ and $|V(G_{1})|=n_{1} \leqslant 2k+2$. Clearly, it is sufficient to prove that $G_{1}$ is Hamiltonian.

Suppose that $G_{1}$ is not Hamiltonian. Let $C_{1}=c_{1}, c_{2},  \cdots , c_{n_{1}-r_{1}}$ be a longest cycle of $G_{1}$ containing $w_{1}$ and $w_{2}$ (Note that $G_{1}-V(P_{1})$ is connected.), such that the number of components of $R_{1}=G_{1}-C_{1}$ is as small as possible. Let $r_{1}=|R_{1}|$, $R_{1}'$ be the largest component of $R_{1}$ and $r_{1}'=|R_{1}'|$. The subscripts of $c_{i}$ will be reduced modulo $n_{1}-r_{1}$ throughout. Obviously, we have $|V(C_{1})|=n_{1}-r_{1} \geqslant 6$.

For any $A, B \subseteq V(G_{1})$, let

\begin{center}
  $e(A, B)=|\{uv\in E(G_{1}):u\in A, v\in B\}|$

  $e(A)=|\{uv\in E(G_{1}):u, v\in A\}|$.
\end{center}

For any $D \subseteq V(C_{1})$, let
\begin{center}
  $D^{+}=\{c_{i+1}:c_{i}\in D\}$ and $D^{-}=\{c_{i-1}:c_{i}\in D\}$.
\end{center}

$\bf Case \ 1$. $R_{1}$ contains an isolated vertex $v_{0}$.

Define that $Y_{0}=\emptyset$, and for any $j \geqslant 1$,
\begin{center}
$X_{j}=N(Y_{j-1}\cup\{v_{0}\})$

$Y_{j}=\{c_{i}\in C_{1}:c_{i-1}, c_{i+1}\in X_{j}\}$
\end{center}
and
\begin{center}
$X=\bigcup\limits_{i = 1}^\infty  {{X_j}},\  \ Y=\bigcup\limits_{i = 0}^\infty  {{Y_j}}, \ \ \   x=|X|\geqslant k \  \  \  and   \  \     \    y=|Y| . $
\end{center}
By the hopping lemma, we have $X \subset V(C_{1}),\  X \cap Y = \emptyset $ and X dose not contain two consecutive vertices of $C_{1}$.

Let $S_{1}, S_{2},  \cdots , S_{x}$ be the sets of vertices contained in the open segments of $C_{1}$ between vertices of $X$. Put $\phi = \{ {S_i}:\left| {{S_i}} \right| \geqslant2, 1\leqslant i\leqslant x\}.  $
Then $S_{i}=\{c_{l}, c_{l+1}, \cdots , c_{m}\}\in\phi $ is said to be $\psi $-connected to $S_{j}=\{c_{q}, c_{q+1}, \cdots , c_{z}\}\in\phi $ if $\left| {{S_i}} \right|$ is odd and $c_{q}$ and $c_{z}$ are both joined to $c_{l+e}$ for all odd $e$, $1\leqslant e \leqslant m-l-1$. Now, $c_{l+1}, c_{l+3}, \cdots, c_{m-1}$ are called $P$-vertices of $S_{i}$.
Set $P= $$ \{$$c_{i}\in V(C_{1})$ : $c_{i}$ is a $P$-vertex of some $S_{j}$ which is $\psi$-connected to some $S_{t}$ of $\phi$$\}$, and $p=|P|$.

Since
$$e(V(G_{1}) - X, X) = (n_{1} - 2 - x)k + 4 - 2e(V(G_{1}) - X)$$
\begin{equation}\label{eq1}
e(X, V(G_{1}) - X) \leqslant xk
\end{equation}
we have
\begin{equation}\label{eq2}
2e(V(G_{1}) - X) \geqslant (n_{1} - 2 - 2x)k + 4.
\end{equation}
On the other hand, under the properties of $C_{1}$, we can follow the series of the arguments in \cite{B. Jackson} and finally have the following inequality:
\begin{equation}\label{eq3}
\begin{split}
2e(V(G_{1}) - X) \leqslant p(k + n_{1} - x - y - p - 1) + (n_{1} - 2x - 2p - 1)(n_{1} - 2x)\\ - p(x - y - p) - 2(r_{1} - 1)(x - y - 1).\ \ \ \ \  \ \  \ \ \  \ \ \ \ \ \ \  \ \  \ \ \  \ \
\end{split}
\end{equation}
Combining (\ref{eq3}) with (\ref{eq2}), it can be deduced that
\begin{equation}\label{eq4}
p + 4 \leqslant (n_{1} - 2x - k)(n_{1} - 1 - 2x - p) + k - 2(r_{1} - 1)(x - y - 1).
\end{equation}
By the definitions of $X$ and $Y$, we have $x\geqslant y$. If $x=y$, we have
\[e(Y \cup \left\{ {{v_0}} \right\}, X) = k + (y - 2)k + 4 = xk + 4 - k, \]
contrary to (\ref{eq1}) because of $k \geqslant 5$. It follows that $2(r_{1} - 1)(x - y - 1)\geqslant 0$.
From the definition of $P$, we have  $p \leqslant \frac{n_{1} - 1 - 2x}2$, which implies $n_{1} - 1 - 2x -p\geqslant 2p - p \geqslant 0$. And $k \geqslant n_{1} - 2x - p - 1$ by $n_{1}\leqslant 2k+2$ and $x\geqslant k$. So we have
\begin{equation}\label{eq5}
p + 4 \leqslant (n_{1} - 2x - k + 1)k - 2(r_{1} - 1)(x - y - 1).
\end{equation}
Therefore by (\ref{eq5}), we have $n_{1} - 2x - k + 1 > 0$, and then $n_{1} > 3k - 1$, a contradiction.

The next two cases in this part are both discussed that $R_{1}$ contains no isolated vertex.

For a path $Q = {q_1}, {q_2},  \cdots , {q_g}, g \geqslant 2$, in $R_{1}$, let $t(Q)$ denote the number of occurrences of ordered pair $(c_{i}, c_{j})$ of the vertices of $C_{1}$ such that $c_{i}$ is joined to one of $q_{1}$ and $q_{g}$, $c_{j}$ is joined to the other, and $e(\left\{ {{q_1}, {q_g}} \right\}, \{{c_{i + 1}}, {c_{i + 2}},  \cdots , {c_{j - 1}}\}) = 0$. We say that $Q$ satisfies the condition $(*)$ if $t(Q)\geqslant 2$,  ${N_{C_{1}}}(\left\{ {{q_1}, {q_g}} \right\}) \not\subset
 \left\{ {u, v, z} \right\}$ and there is a ordered pair $(c_{i}, c_{j})$ of the vertices of $C_{1}$ such that $u, v, z, {w_1}\ and \ {w_2} \notin \left\{ {{c_{i + 1}}, {c_{i + 2}},  \cdots , {c_{j - 1}}} \right\}$. Put $A = {N_{C_{1}}}({q_1})$ and $B = {N_{C_{1}}}({q_g})$.

$\bf Case \  2. $ $2\leqslant|R_{1}'|\leqslant k-1$.

Before the proof of this case, we derive some results about the structure of $R_{1}'$.
\begin{lem}\label{lem1}
There exists a maximal path $Q$ in $R_{1}'$ such that $Q$ satisfies $(*)$.
\end{lem}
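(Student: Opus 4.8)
The plan is to locate a path $Q$ inside the largest component $R_1'$ of $R_1$ by choosing it greedily, and then to verify each clause of condition $(*)$ in turn, using the extremal choice of the longest cycle $C_1$ together with the regularity and order constraints. First I would take $Q = q_1,\dots,q_g$ to be a \emph{longest} path in $R_1'$; maximality (together with $R_1'$ being connected and having at least two vertices) guarantees that all neighbours of $q_1$ and of $q_g$ lie in $V(C_1)\cup V(R_1')$, and in fact the standard hopping-lemma setup forces $N(q_i)\cap V(R_1)\subseteq V(R_1')$ and that neither endpoint has a neighbour on $C_1$ adjacent (along $C_1$) to a neighbour of the other endpoint — otherwise one could reroute $C_1$ through $Q$ to get a longer cycle, contradicting the choice of $C_1$. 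This is where the bulk of the work sits: translating ``$C_1$ is longest'' into the combinatorial statement that between two consecutive elements of $A\cup B$ on $C_1$ (reading $A=N_{C_1}(q_1)$, $B=N_{C_1}(q_g)$) there is no edge from $\{q_1,q_g\}$ into the open segment, which is exactly the quantity $t(Q)$ counts.

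Next I would show $t(Q)\ge 2$. Since $G$ is $k$-regular and $|V(G)|\le 2k$, each of $q_1,q_g$ has at least $k-(r_1'-1)\ge k-(k-2)=2$ neighbours on $C_1$ in the case $2\le r_1'\le k-1$ (using that a vertex of $R_1'$ has at most $r_1'-1$ neighbours inside $R_1'$); so $|A|,|B|\ge 2$, and a counting/pigeonhole argument on how $A$ and $B$ interleave around the cycle, combined with the no-edge-into-the-gap property from the previous paragraph, yields at least two admissible ordered pairs $(c_i,c_j)$. The clause ${N_{C_1}}(\{q_1,q_g\})\not\subset\{u,v,z\}$ follows from a degree count: if all $C_1$-neighbours of both endpoints were among $u,v,z$, then $|A\cup B|\le 3$, and since the interior vertices $w_1,w_2$ of $P_1$ are not available as neighbours (they have degree $2$ on $C_1$ already), the endpoints $q_1,q_g$ together with $R_1'$ would be nearly separated from the rest of $G_1$, contradicting either $2$-connectivity of $G$ or the hypothesis that $\{u,v,z\}$ is not a cut-set of $G$ (note $G\setminus\{u,v,z\}$ connected implies $R_1'$ cannot be attached to $C_1$ only through $u,v,z$).

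For the last clause I need an ordered pair $(c_i,c_j)$ realizing $t(Q)$ whose open segment $c_{i+1},\dots,c_{j-1}$ avoids all of $u,v,z,w_1,w_2$. Here I would argue by contradiction: the five special vertices occupy a bounded portion of $C_1$ — indeed $w_1,w_2$ sit in fixed positions and $u,v,z$ lie near them since $P_1 = u w_1 v w_2 z$ is a subpath of any cycle through $w_1,w_2$, so in fact $u,v,z,w_1,w_2$ are \emph{consecutive} on $C_1$, forming a single block $uw_1vw_2z$. Thus at most one of the $t(Q)\ge 2$ gaps can meet this block, leaving at least one ordered pair $(c_i,c_j)$ with the required disjointness. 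Assembling these pieces gives a maximal path $Q$ in $R_1'$ satisfying $(*)$.

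The main obstacle I anticipate is the first paragraph: making precise and correct the rerouting arguments that convert maximality of $C_1$ (and minimality of the number of components of $R_1$) into the structural facts about $A$, $B$, and the empty gaps, while carefully tracking the role of the inserted vertices $w_1,w_2$ and the forbidden set $\{u,v,z\}$ so that a rerouting never destroys the property that $C_1$ passes through $w_1$ and $w_2$ (equivalently, through the $2$-path $P$). This bookkeeping — essentially re-running Jackson's hopping-lemma analysis from \cite{B. Jackson} with two marked vertices on the cycle — is routine in spirit but delicate in detail, and it is the step most likely to hide a case that needs the hypothesis $k\ge 5$.
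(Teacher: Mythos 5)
Your strategy inverts the paper's: you fix a longest (hence maximal) path $Q$ in $R_1'$ first and then try to verify the three clauses of $(*)$ for that particular $Q$, whereas the paper first chooses a longest path satisfying $(*)$ (whose existence it attributes to $2$-connectivity and the non-cut-set hypothesis) and then proves that such a path must in fact be maximal, via the extension argument using $N_{R_1'}(b_1)\geqslant k-3$. The inversion is not merely cosmetic, because the two clauses of $(*)$ involving $\{u,v,z\}$ and $\{w_1,w_2\}$ need not hold for an arbitrary longest path, and your verifications of them have gaps. For the clause $N_{C_1}(\{q_1,q_g\})\not\subset\{u,v,z\}$ you appeal to the hypothesis that $\{u,v,z\}$ is not a cut-set, but that hypothesis only guarantees that the \emph{component} $R_1'$ has a neighbour on $C_1$ outside $\{u,v,z\}$; that neighbour may be attached to an internal vertex of $Q$, so $R_1'$ is not ``nearly separated'' even if both endpoints see only $u,v,z$ on $C_1$. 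A degree count gives $|A|,|B|\geqslant k-g+1$, so the bad situation is excluded only when $g\leqslant k-3$; for $g\in\{k-2,k-1\}$ your argument does not close.

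For the last clause your key claim is that the block $u\,w_1\,v\,w_2\,z$ (which is indeed a subpath of $C_1$, since $w_1$ and $w_2$ have degree $2$ in $G_1$ --- that observation is correct and useful) meets at most one of the open segments between consecutive vertices of $A\cup B$. That is false when $v\in A\cup B$: the block is then split into a piece containing $w_1$ and a piece containing $w_2$, which can lie in two \emph{different} counted gaps, and if $t(Q)=2$ both admissible ordered pairs are contaminated and the clause fails. (This is essentially the configuration that the paper's Lemma~\ref{lem2} has to fight, and it does so with explicit degree contradictions rather than a pigeonhole on the gaps.) To repair your approach you would have to show that when the chosen longest path violates the second or third clause, some other maximal path of $R_1'$ can be substituted --- which is exactly the rerouting machinery (the cases $b_1q_2\in E(G_1)$ or not, using that $b_1$ misses at most one vertex of $R_1'$) that the paper deploys and that your proposal omits.
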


\begin{proof}
Since $|R_{1}'|\leqslant k-1$,  for any $v_{i}\in V(R_{1}'), \ i=1 , \cdots , r_{1}' $, we have ${N_{C_{1}}}({v_i}) \geqslant 2$. By the assumption of 2-connectivity and $\{u, v, z\}$ is not a cut-set, there exists a path $Q = {q_1}, {q_2},  \cdots , {q_g}$ in $R_{1}'$, which is chosen as long as possible such that $Q$ satisfies $(*)$.

If $Q$ is not a maximal path of $R_{1}'$, let $Q' ={b_1}, {b_2},  \cdots , {b_s}, {q_1}, {q_2},  \cdots , {q_g}, {q_{g+1}},  \cdots , {q_e}$ be a maximal path in $R_{1}'$ containing $Q$. Without loss of generality, we assume $s \geqslant 1$.

From the definition of $Q$, it is easy to see that ${N_{C_{1}}}({b_1}) \leqslant 3$. So ${N_{R_{1}^{'}}}({b_1}) \geqslant k-3$, and there is at most one vertex in $R_{1}^{'}$ which is not adjacent to ${b_1}$.

We consider the following two cases.

  Case (a):  ${b_1}{q_2}\in E(G_1)$.

In this case, there is a longer path $Q^{''} ={q_1}, {b_s}, {b_{s - 1}}, \cdots , {b_1}, {q_2},  \cdots , {q_g}$ than $Q$ that satisfies $(*)$, a contradiction of the definition of $Q$.

  Case (b):  ${b_1}{q_2} \notin  E(G_1)$.

In this case, if $s\geqslant 2$, there is a longer path $Q^{''} ={q_1}, {b_s}, {b_{s - 1}}, \cdots , {b_1}, {q_3},  \cdots , {q_g}$ than $Q$ that satisfies $(*)$. If $s=1$, we claim that ${N_{C_1}}({q_2})\leqslant 3 $, otherwise, $Q^{''} ={q_2}, {q_1},{b_1}, {q_3},  \cdots , {q_g}$ is a longer path than $Q$ that satisfies $(*)$. Therefore, ${q_2}$ is joined to every vertex of $R_{1}'$ except ${b_1}$. There is a longer path $Q^{''} ={q_1}, {b_1}, {q_3}, {q_2}, {q_4}, \cdots , {q_g}$ than $Q$ that satisfies $(*)$, a contradiction.

A similar argument holds if $e>g$.
\end{proof}

\begin{lem}\label{lem2}
There exists a maximal path $Q$ in $R_{1}'$ such that $t(Q)\geqslant 3$.
\end{lem}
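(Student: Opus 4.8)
The plan is to build on Lemma~\ref{lem1}, which already hands us a maximal path $Q=q_1,q_2,\dots,q_g$ of $R_1'$ with $t(Q)\geqslant 2$, satisfying condition $(*)$. We want to upgrade $t(Q)\geqslant 2$ to $t(Q)\geqslant 3$. Recall $A=N_{C_1}(q_1)$ and $B=N_{C_1}(q_g)$; since $|R_1'|\leqslant k-1$ and $Q$ is a path in $R_1'$, both $q_1$ and $q_g$ have at least two neighbours on $C_1$ (in fact $|A|,|B|\geqslant k-(|R_1'|-1)\geqslant k-(k-2)=2$, and typically much more when $|R_1'|$ is small). The quantity $t(Q)$ counts ordered pairs $(c_i,c_j)$ with $c_i$ adjacent to one endpoint of $Q$, $c_j$ to the other, and the open arc $c_{i+1},\dots,c_{j-1}$ containing no neighbour of $\{q_1,q_g\}$; equivalently, reading around $C_1$, $t(Q)$ counts the "alternations'' in the cyclic sequence of $C_1$-vertices that see $q_1$ versus those that see $q_g$, restricted to consecutive occupied positions where one sees $q_1$ and the next sees $q_g$ (or vice versa). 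So first I would set up this counting interpretation carefully: order the vertices of $A\cup B$ around $C_1$ and look at the cyclic word over the alphabet $\{q_1,q_g\}$ (a vertex in $A\cap B$ contributes to both), and observe $t(Q)$ is essentially the number of adjacent unordered $\{q_1,q_g\}$-blocks.

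Next I would run the same maximality/exchange machinery used in Lemma~\ref{lem1}, but now pushing harder. The idea: if $t(Q)=2$ exactly, then around $C_1$ all neighbours of $q_1$ lie in one arc and all neighbours of $q_g$ in the complementary arc (two "blocks''). I would then use the longest-cycle property of $C_1$ together with the hopping-lemma structure to derive a contradiction. Concretely, with only two blocks one can try the standard rotation: pick $c_i\in A$ and $c_j\in B$ bounding one of the two empty arcs; the cycle $c_i c_{i-1}\cdots c_{j} \, q_g q_{g-1}\cdots q_1 \, c_i$ (traversing the arc from $c_j$ back to $c_i$ the long way and inserting $Q$) is again a cycle containing $w_1,w_2$; since $C_1$ is longest, no vertex of $R_1$ lies "inside'' in a way that would let us lengthen it, and this constrains the adjacencies of the vertices on the two empty arcs. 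In particular, vertices on the empty arcs cannot send edges to $R_1'$ or to each other across the arc without producing a longer cycle or reducing the number of components of $R_1$; counting degrees of those arc-vertices against $n_1\leqslant 2k+2$ then forces the arcs to be very short, and combined with $|A|,|B|$ being reasonably large (from $|R_1'|\leqslant k-1$) this contradicts having only two blocks. This is the step I expect to be the main obstacle: squeezing out the extra alternation requires carefully ruling out every way the two empty arcs could "absorb'' the missing edges, and the bookkeeping must respect the exceptional vertices $u,v,z,w_1,w_2$ that are excluded in the definition of $t(Q)$ and condition $(*)$.

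Finally, as in Lemma~\ref{lem1}, if the path $Q$ produced by the $t(Q)\geqslant 3$ argument is not maximal, I would extend it to a maximal path $Q'=b_1,\dots,b_s,q_1,\dots,q_g,q_{g+1},\dots,q_e$ and repeat the case analysis on $b_1q_2\in E(G_1)$ versus $b_1q_2\notin E(G_1)$ (and symmetrically at the $q_g$ end), checking in each case that the exchanged longer path still has $t\geqslant 3$; since adding pendant vertices at the ends of $Q$ only enlarges $N_{C_1}$ of the new endpoints (or else triggers one of the exchange moves to a strictly longer path, contradicting the choice of $Q$), the alternation count cannot drop. Thus a maximal $Q$ with $t(Q)\geqslant 3$ exists, which is the assertion of the lemma.
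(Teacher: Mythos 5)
Your plan points in the right general direction (maximality of $C_1$ plus degree counting against $n_1\leqslant 2k+2$), but it has two concrete gaps. First, your structural description of the case $t(Q)=2$ is incomplete: you assume the two alternations force $A=N_{C_1}(q_1)$ and $B=N_{C_1}(q_g)$ into two complementary arcs (essentially the subcase $A\cap B=\emptyset$), but $t(Q)=2$ also occurs when $A=B=\{c_i,c_j\}$, i.e.\ both endpoints of $Q$ see exactly the same two cycle vertices. This is not a degenerate afterthought; it is the hard case. There your heuristic that ``$|A|,|B|$ are reasonably large'' fails outright: $|A|=|B|=2$ is consistent with $|R_1'|\leqslant k-1$, and it forces $g=r_1'=k-1$, so the contradiction must come from the length of $Q$ rather than from the size of $A$ and $B$. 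The paper handles this by taking $c_d=c_i^-$ or $c_{d'}=c_j^+$ (chosen outside $\{w_1,w_2\}$) and showing, via the standard crossing argument for a longest cycle, that this vertex is non-adjacent to all of $Q$, to a reflected arc of length $g$ on $C_1$, and to at least two of $\{v,w_1,w_2\}$, giving $d(c_d)\leqslant 2k+2-2(k-1)-3=1$, a contradiction. Nothing in your sketch produces this vertex or this count.

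Second, even in the disjoint-block case, the decisive step --- which you yourself flag as ``the main obstacle'' --- is left undone. The actual argument is not that ``the arcs must be very short''; it is a single explicit degree bound on a vertex of $A^+$ or $A^-$: such a vertex is non-adjacent to $Q$, to two vertices next to each $c_{j_h}\in B$, and to $\{w_1,w_2\}$ and itself, giving $d\leqslant 2k+2-[g+2(l-1)+3]\leqslant k-2<k$. Without carrying out one of these counts (and checking which side of $c_i$ or which of $A^{+}/A^{-}$ avoids the exceptional vertices $w_1,w_2$), the lemma is not proved. Your closing paragraph about re-extending to a maximal path is unnecessary: Lemma~\ref{lem1} already supplies a maximal $Q$ with $t(Q)\geqslant 2$, and the entire content of Lemma~\ref{lem2} is ruling out $t(Q)=2$ for that path.
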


\begin{proof}
Suppose that $Q$ satisfies the property of Lemma~\ref{lem1} and $t(Q)=2$. Then we consider the following two cases:

case (a): $A=B=\{c_{i}, c_{j}\} \not\subset \{u, v, z\}$, $c_{i}\neq v$ and $c_{j}\neq v$;

case (b): $A=\left\{ {{c_{{i_1}}}, {c_{{i_2}}},  \cdots , {c_{{i_s}}}} \right\}$ and $B=\left\{ {{c_{{j_1}}}, {c_{{j_2}}},  \cdots , {c_{{j_l}}}} \right\}$ such that $s\geqslant 2, l\geqslant 2$ and $\left\{ {{c_{{i_1}}}, {c_{{i_2}}},  \cdots , {c_{{i_s}}}} \right\} \cap \left\{ {{c_{{j_1}}}, {c_{{j_2}}},  \cdots , {c_{{j_l}}}} \right\} = \emptyset $.

If case (a) occurs, we have $g=r_{1}'=k-1$. Without loss of generality, let $\{w_{1}, w_{2}\}\notin \left\{ {{c_{i + 1}}, {c_{i + 2}},  \cdots , {c_{j - 1}}} \right\} $, then we have $c_{d}={{\rm{c}}_i}^ - $ or $c_{d'}={{\rm{c}}_j}^ + $ such that $c_{d}\notin \{{w_1, w_2}\}$ or $c_{d'}\notin \{{w_1, w_2}\}$. Clearly, we have
\[{N_{C_1}}({c_{d}}) \cap \left[ {Q \cup \left\{ {{c_{j - 1}}, {c_{j- 2}},  \cdots , {c_{j- g}}} \right\} \cup c_{d} } \right] = \emptyset \]
or
 \[{N_{C_1}}({c_{d'}}) \cap \left[ {Q \cup \left\{ {{c_{i + 1}}, {c_{i + 2}},  \cdots , {c_{i + g}}} \right\} \cup c_{d'} } \right] = \emptyset . \]

 And there is at least two of $\{v, w_{1}, w_{2}\}$ which can not be adjacent to $c_{d}$ or $c_{d'}$. It follows that \[{d_{C_1}}({c_{d}}) \leqslant 2k + 2 - 2(k - 1) - 3 = 1  \] or \[{d_{C_1}}({c_{d'}}) \leqslant 2k + 2 - 2(k - 1) - 3 = 1.\] This is a contradiction.

For case (b), without loss of generality, let $w_{1}, \ w_{2}\notin \{{c_{{j_1}}}, {c_{j_1 + {1}}}, \cdots, {c_{{j_l}}}\}$, then there exists either some ${c_z} \in {A^ + }$ satisfying  ${N_{C_1}}({c_z}) \cap \left[ {Q \cup (\bigcup\limits_{h = 1}^{l - 1} {\left\{ {{c_{j_{h} + 1}}, {c_{j_{h} + 2}}} \right\}} ) \cup \left\{ {{w_1}, {w_2}, {c_{z}}} \right\}} \right] = \emptyset $,  or some ${c_f} \in {A^ - }$  satisfying ${N_{C_1}}({c_f}) \cap \left[ {Q \cup (\bigcup\limits_{h = 2}^{l } {\left\{ {{c_{j_{h} - 1}}, {c_{j_{h} - 2}}} \right\}} ) \cup \left\{ {{w_1}, {w_2}, {c_{f}}} \right\}} \right] = \emptyset $. Which implies \[{d_{C_1}}({c_z}) \leqslant 2k + 2 - \left[ {g + 2(l - 1) + 3} \right] \leqslant k - 2\]
or\[ {d_{C_1}}({c_f}) \leqslant 2k + 2 - \left[ {g + 2(l - 1) + 3} \right] \leqslant k - 2\]

a contradiction.
\end{proof}

\begin{cor}\label{cor1}
If $t(Q)\geqslant 3$, then $g\leqslant k-2$.
\end{cor}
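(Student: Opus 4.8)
\emph{Proof proposal.} The plan is to argue by contradiction and suppose $g\geqslant k-1$. Since $Q$ is a path inside $R_1'$ and we are in the present case (where $r_1'\leqslant k-1$), we have $g\leqslant r_1'\leqslant k-1$, so in fact $g=r_1'=k-1$ and $Q$ is a spanning (Hamiltonian) path of $R_1'$; in particular $r_1\geqslant r_1'=k-1$, hence $|V(C_1)|=n_1-r_1\leqslant (2k+2)-(k-1)=k+3$. Also, because $Q$ is a maximal path, every neighbour of $q_1$ in $R_1'$ lies on $Q$, so $|N_{R_1'}(q_1)|\leqslant g-1$ and therefore $|A|=|N_{C_1}(q_1)|\geqslant k-(g-1)=2$, and likewise $|B|\geqslant 2$; in particular the pairs counted by $t(Q)$ do exist.

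Next I would exploit the longest-cycle property through the standard exchange. Let $(c_i,c_j)$ be any ordered pair counted by $t(Q)$, with short arc $S=\{c_{i+1},\dots,c_{j-1}\}$, and say $c_i$ is joined to one endpoint of $Q$ and $c_j$ to the other. Replacing the arc $S$ of $C_1$ by the whole path $Q$ (traversed so as to run from $c_i$ to $c_j$) yields a cycle $C'$ with $V(C')=(V(C_1)\setminus S)\cup V(Q)$, hence $|V(C')|=|V(C_1)|-|S|+g$; here $C'$ is genuinely a simple cycle because $V(Q)$ is disjoint from $V(C_1)$ and $c_i\neq c_j$. If $S$ contains neither $w_1$ nor $w_2$, then $C'$ still passes through $w_1$ and $w_2$, so the maximality of $C_1$ forces $|V(C')|\leqslant |V(C_1)|$, i.e. $|S|\geqslant g=k-1$. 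In any case $|S|\geqslant 1$, since $S=\emptyset$ would already make $C'$ a cycle through $w_1,w_2$ strictly longer than $C_1$.

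A short count then finishes the argument. The $|A\cup B|$ vertices of $A\cup B$ cut $C_1$ into $|A\cup B|$ open arcs, and the arcs realised by the pairs counted in $t(Q)$ are among these, so $|A\cup B|\geqslant t(Q)\geqslant 3$. Since $w_1$ and $w_2$ lie in at most two of these arcs, at least one of the (at least three) crossing arcs avoids both $w_1$ and $w_2$, and by the previous paragraph that arc has at least $k-1$ vertices, while each of the other crossing arcs has at least one vertex. Consequently
\[
|V(C_1)| \;=\; |A\cup B| + \sum_{\text{arcs}} (\text{length}) \;\geqslant\; 3 + \bigl((k-1)+1+1\bigr) \;=\; k+4,
\]
contradicting $|V(C_1)|\leqslant k+3$. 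Hence $g\leqslant k-2$.

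The main obstacle I anticipate is making the exchange step fully legitimate: one must verify that the rerouted cycle $C'$ is simple and, crucially, that it still contains the inserted vertices $w_1$ and $w_2$, which is exactly why the ``clean arc'' bookkeeping is needed and why, with $t(Q)\geqslant 3$ in hand, only the at most two arcs carrying $w_1$ or $w_2$ are allowed to be short. Once this is in place the rest is the elementary arithmetic displayed above, which uses the hypotheses only through $n_1\leqslant 2k+2$ and $g=k-1\geqslant 2$ (so $k\geqslant5$ is more than enough).
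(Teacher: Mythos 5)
Your proof is correct and takes essentially the same route as the paper: both arguments reduce to counting $n_1\geqslant r_1+|A\cup B|+(\text{one crossing arc of length}\geqslant g)+2\geqslant 2(k-1)+3+2=2k+3>2k+2$, where the long arc comes from the arc-for-path exchange applied to a crossing pair whose arc avoids $w_1$ and $w_2$. You simply make explicit the exchange justification and the ``at most two arcs can be short'' bookkeeping that the paper's one-line inequality leaves implicit.
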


\begin{proof}
Suppose $g\geqslant k-1$ and $t(Q)\geqslant 3$. Then $|A\cup B|\geqslant3$, we have \[\begin{array}{l}
2k + 2 \geqslant \left| {V(G_{1})} \right| = \left| R_{1} \right| + \left| {V(C_{1})} \right| \\
 \ \ \ \ \ \ \ \ \ \geqslant r_{1} + \left| {A \cup B} \right| + (t(Q) - 2)g + 2  \ \ \ \ \\
 \ \ \ \ \ \ \ \ \       \geqslant 2(k - 1) + 3 + 2 = 2k + 3
\end{array}\]
a contradiction.
\end{proof}

\begin{lem}\label{lem3}
There exists a maximal path $Q$ in $R_{1}'$ such that $t(Q)\geqslant 3$. Then $A=B$.
\end{lem}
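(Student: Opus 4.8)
The plan is to argue by contradiction. Assume $A\neq B$; reversing $Q$ interchanges $A$ and $B$ and preserves $t(Q)$, so we may fix a vertex $c_a\in A\setminus B$, i.e.\ $c_aq_1\in E(G_1)$ and $c_aq_g\notin E(G_1)$. The goal is to violate one of the extremal choices in force — $C_1$ longest among cycles through $w_1,w_2$; among those, $R_1$ with the fewest components; $Q$ a maximal path of $R_1'$, which by Lemma~\ref{lem1} we may take to satisfy $(*)$ — or the fact that every vertex of $G_1$ other than $w_1,w_2$ has degree $k$. Three easy remarks come first. (a) By Corollary~\ref{cor1}, $g\leqslant k-2$; since $Q$ is a maximal path of $R_1'$ with $|R_1'|\leqslant k-1$, each of $q_1,q_g$ has at most $g-1$ neighbours in $R_1$, so $|A|,|B|\geqslant k-g+1\geqslant 3$. (b) For $c\in A\cup B$, neither $c^{+}$ nor $c^{-}$ has a neighbour on $Q$: if $cq_1,c^{-}q_r\in E(G_1)$, then removing the edge $cc^{-}$ from $C_1$ and inserting the path $c^{-}q_rq_{r-1}\cdots q_1c$ yields a cycle through $w_1,w_2$ on $|C_1|+r$ vertices, longer than $C_1$. (c) $A\cup B$ has no two consecutive vertices of $C_1$: between two such vertices one may insert $q_1$, or $q_g$, or the entire path $q_1\cdots q_g$, each time lengthening $C_1$. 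In particular $c_a^{+},c_a^{-}\notin A\cup B$ and neither sends an edge to $Q$.

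By $(*)$ there is an ordered pair $(c_i,c_j)$ counted by $t(Q)$, say with $c_iq_1,c_jq_g\in E(G_1)$, whose open arc $F^{\circ}=\{c_{i+1},\dots,c_{j-1}\}$ avoids $\{u,v,z,w_1,w_2\}$; as $c_a\not\sim q_g$ we have $c_a\neq c_j$, so $c_a$ is $c_i$ or lies strictly inside the complementary arc from $c_j$ to $c_i$. Replacing the sub-arc $c_i,F^{\circ},c_j$ by $c_i,q_1,\dots,q_g,c_j$ keeps $w_1,w_2$ on the cycle, so $|F^{\circ}|\geqslant g$. Next, unless both $C_1$-neighbours of $c_a$ are $w_1$ and $w_2$, I would pick $c^{\ast}\in\{c_a^{-},c_a^{+}\}\setminus\{w_1,w_2\}$ (remark (c) places $c^{\ast}$ strictly inside the complementary arc, away from $F^{\circ}$), so $d_{G_1}(c^{\ast})=k$. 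A rerouting of $C_1$ that runs $c_a,q_1,\dots,q_g,c_j$ and then back along $C_1$ through $F^{\circ}$ would show that $c^{\ast}$ has no neighbour in a suitable length-$g$ stretch of $F^{\circ}$ — a shorter stretch would give a cycle longer than $C_1$; and if such a rerouting merely reproduces a longest cycle, the stretch of $F^{\circ}$ it moves into the complement reduces the number of components there, also a contradiction. Then $c^{\ast}$ is non-adjacent to $V(Q)$ ($g$ vertices), to that stretch ($g$ more, disjoint from $V(Q)$), to $w_1,w_2$ (provided $c^{\ast}\notin\{u,v,z\}$), and to itself, so $d_{G_1}(c^{\ast})\leqslant(2k+2)-(2g+3)$; when $g$ is large — in particular $g=k-2$ — this is $<k$, the sought contradiction, and hence $A=B$.

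The step I expect to resist is making this count decisive when $g$ is small. The $\geqslant 3$ pairs counted by $t(Q)$ already force (by the argument behind Corollary~\ref{cor1}) a good part of $V(C_1)$ to be occupied, so one must use the bounds $|F_s^{\circ}|\geqslant g$ for the several pairs together and balance them against the $A\cup B$-gaps flanking $c_a$ supplied by remark (c), perhaps taking $c^{\ast}$ to be an interior vertex of one of the $F_s^{\circ}$ instead of a neighbour of $c_a$ — bookkeeping of the same kind as in the proof of Lemma~\ref{lem2}. One also has to dispose of the degenerate case where the two $C_1$-neighbours of $c_a$ are exactly $w_1,w_2$: since $w_1$ and $w_2$ have $C_1$-neighbourhoods $\{u,v\}$ and $\{v,z\}$, this forces $c_a=v$, hence $P_1\subseteq C_1$ and $A=B\cup\{v\}$, and it must be excluded separately — for instance by invoking the clause of $(*)$ that $N_{C_1}(\{q_1,q_g\})\not\subseteq\{u,v,z\}$ to relocate the argument to a vertex of $(A\cap B)\setminus\{u,v,z\}$.
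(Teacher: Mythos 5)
There is a genuine gap, and you have in fact pointed at it yourself. Your decisive step is a local degree count at a single vertex $c^{\ast}\in\{c_a^{-},c_a^{+}\}$: excluding $V(Q)$ ($g$ vertices), a length-$g$ stretch of $F^{\circ}$, $w_1$, $w_2$ and $c^{\ast}$ itself gives $d_{G_1}(c^{\ast})\leqslant (2k+2)-(2g+3)=2k-2g-1$, which contradicts $k$-regularity only when $g>(k-1)/2$. For small $g$ --- and $g=2$ is entirely possible here; nothing before this lemma rules it out --- the bound is vacuous, and the paragraph in which you promise to fix this by ``bookkeeping of the same kind as in the proof of Lemma~\ref{lem2}'' is not an argument: it is precisely the missing content. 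Note also that the hypothesis $A\neq B$ enters your count only through the existence of the one vertex $c_a$, and a single extra vertex cannot be expected to force a contradiction by a degree count at one of its cycle-neighbours; in addition, your rerouting that exhibits the non-adjacent stretch of $F^{\circ}$ tacitly assumes $c_a$ is the endpoint $c_i$ of the one pair supplied by $(*)$, which you concede need not hold, and the degenerate case $\{c_a^{-},c_a^{+}\}=\{w_1,w_2\}$ is only gestured at.

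The paper closes the lemma by a different, global count that works uniformly in $g$. It lower-bounds $|V(C_1)|=n_1-r_1$ by $|A|+|A^{+}|+|B^{-}|+\theta+\sigma+(t-2)(g-2)$, where $\sigma\geqslant 1$ records exactly the gain from $B-A\neq\emptyset$ (your remark that $A^{+}\cap B^{-}$ can only meet $\{w_1,w_2\}$, and the placement of $u,v,z$, are absorbed into the correction $\theta\in\{0,-1,-2\}$). Since $|A|,|B|\geqslant k-g+1$ by maximality of $Q$ (your remark (a)), this yields $2k+2-r_1\geqslant 3(k-g+1)+\theta+\sigma+(t-2)(g-2)$, and with $g\leqslant k-2$ from Corollary~\ref{cor1}, $t\geqslant 3$ and $r_1\geqslant g$ one gets $-1-\theta\geqslant 1$, a contradiction for every admissible $g$; the residual case $\theta=-2$ is eliminated by showing it forces $(A^{-}\cup B^{+})-(A^{+}\cup B^{-})=\emptyset$ and hence $\theta=0$. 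The point is that the term $3(k-g+1)$ grows exactly as $g$ shrinks, which is the regime where your $2g+3$ non-neighbours give nothing. Your remarks (a)--(c) are correct and are indeed the raw ingredients of this count, but the way you assemble them leaves the whole range $2\leqslant g\leqslant (k-1)/2$ uncovered.
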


\begin{proof}
By contradiction. Suppose $B\neq A$ and $|B-A|\geqslant 1$, without loss of generality, $|B|\geqslant |A|$. We have \[\begin{array}{l}
V(C_{1}) = n_{1} - r_{1} \geqslant \left| {A \cup B} \right| + \left| {{A^ + } \cup {A^ - } \cup {B^ + } \cup {B^ - }} \right|{\rm{ + (t - 2)(g - 2)}}\\
\ \ \ \ \ \ \ \  = \left| A \right|{\rm{ + }}\left| {B{\rm{ - }}A} \right| + \left| {{A^ + } \cup {B^ - }} \right|{\rm{ + }}\left| {({A^ - } \cup {B^ + }) - ({A^ + } \cup {B^ - })} \right|{\rm{ + (t - 2)(g - 2)}}.
\end{array}\]

Since $g\geqslant2$, if $c_{d}\in {A^ + } \cap {B^ - }$, we have $c_{d}=w_{1}$ or $c_{d}=w_{2}$. Let $t(Q) = t$, $\sigma  = \left| {B{\rm{ - }}A} \right| + \left| {({A^ - } \cup {B^ + }) - ({A^ + } \cup {B^ - })} \right|$, and\[{ \theta{ = }}\left\{ \begin{array}{l}
 - 2  \ \ \ \ if\  u\in A ,  \  v\in A\cap B, \  z\in B\\
 - 1 \ \ \ \  if\ u\in A , \   v\in B\backslash A  \  \  or \   \  v\in A\backslash B, \   z\in B\\
0\ \ \ \ \ \ otherwise.
\end{array} \right. \]

So we have \[n_{1} - r_{1} \geqslant \left| A \right| + \left| {{A^ + }} \right| + \left| {{B^ - }} \right| + \theta  + \sigma  + (t - 2)(g - 2). \]

By the maximality of $Q$, $\left| A \right| \geqslant k - g + 1$ and $\left| B \right| \geqslant k - g + 1$. Therefore we have \[\begin{array}{l}
  2k + 2 - r_{1} \geqslant \left| A \right| + \left| {{A^ + }} \right| + \left| {{B^ - }} \right| + \theta  + \sigma  + (t - 2)(g - 2)\\
2k + 2 - r_{1} \geqslant 3(k - g + 1) + \theta  + \sigma  + (t - 2)(g - 2)\\
 \ \ \ \ \ \ \ \ 1 - \theta  \geqslant r_{1} - g + k - g + \sigma  + (t - 3)(g - 2).
\end{array}\]

By Corollary~\ref{cor1}, we have \[ - 1 - \theta  \geqslant r_{1} - g + \sigma  + (t - 3)(g - 2). \]

Since $t(Q)\geqslant 3$,  $g\geqslant 2$, $r_{1}\geqslant g$ and $\sigma \geqslant  1$, we have a contradiction when $\theta=0$ or $\theta=-1$. If $\theta=-2$, we have $({A^ - } \cup {B^ + }) - ({A^ + } \cup {B^ - }) = \emptyset $ which implies $\theta=0$, a contradiction. In fact, let $c_{i}\in B-A$ be the vertex such that the next vertex of $A \cup B$ after  $c_{i}$ belongs to A. Since $c_{i+1} \notin ({A^ - } \cup {B^ + }) - ({A^ + } \cup {B^ - })$, we have $c_{i+1}$ is in ${B^ - }$, which implies $c_{i+2}\in B\cap A$ and then ${c_i} = u \in B - A$ and ${c_{i + 2}} = v \in B \cap A$, or ${c_i} = v \in B - A$ and ${c_{i + 2}} = z \in B \cap A$. According to the definition of $\theta$, we see $\theta=0$.
\end{proof}

\begin{lem}\label{lem4}
There exists a maximal path $Q$ in $R_{1}'$ such that $t(Q)\geqslant 3$. Then $g=k-t+1$.
\end{lem}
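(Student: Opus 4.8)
The plan is to run the same vertex-counting argument that powered Corollary~\ref{cor1} and Lemma~\ref{lem3}, but now push it to an exact equality. By Lemma~\ref{lem3} we already know $A = B$, so write $A = B = \{c_{m_1}, c_{m_2}, \dots, c_{m_a}\}$ with $a = |A| \geq 3$ (the last inequality since $t(Q) \geq 3$). The maximality of $Q$ in $R_1'$ forces, exactly as in Lemma~\ref{lem3}, the bound $|A| = |B| \geq k - g + 1$: any vertex of $R_1'$ off the path $Q$ that were adjacent to an endpoint would extend $Q$, so $q_1$ and $q_g$ each send $\geq k - (g-1)$ edges to $C_1$, and these land in $A$ (resp.\ $B$).

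Next I would count $|V(C_1)|$ from below using the standard "jump'' structure. Between consecutive vertices of $A$ on $C_1$, having $t(Q) = t$ occurrences of good ordered pairs $(c_i,c_j)$ forces at least $t-2$ internal segments, each containing at least $g$ vertices (no neighbour of $\{q_1,q_g\}$ in the open interval, and the interval together with $Q$ would otherwise give a longer cycle — here is where $g$, not $g-2$, enters, using that $A=B$ so the pair is "aligned''). Together with $A$ itself and its shifts $A^+$, this gives
\[
n_1 - r_1 = |V(C_1)| \geq |A| + |A^+ \setminus A| + (t-2)g - (\text{correction for } u,v,z,w_1,w_2),
\]
and since $a \geq 3$ we have $|A^+\setminus A| \geq a - (\text{small}) $; combined with $|A|\geq k-g+1$ and $r_1 \geq r_1' \geq g$ and $n_1 \leq 2k+2$, rearranging yields $g \geq k - t + 1$. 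The reverse inequality $g \leq k - t + 1$ is precisely the chain of estimates already assembled in the proof of Corollary~\ref{cor1} (there $g \leq k-2$ came from $t \geq 3$; with general $t$ the same count gives $2k+2 \geq r_1 + |A\cup B| + (t-2)g + 2 \geq g + 3 + (t-2)g + 2$... one checks this is too weak and the correct matching bound comes from the $|A|\geq k-g+1$ inequality applied in $r_1 + 3(k-g+1)+(\text{shifts}) \leq 2k+2$, forcing $g \leq k-t+1$). Pinching the two inequalities gives $g = k - t + 1$.

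The main obstacle I anticipate is bookkeeping the contributions of the five special vertices $u, v, z, w_1, w_2$: in Case 2 the path $Q$ satisfies $(*)$ precisely so that one can choose a good ordered pair $(c_i,c_j)$ whose open interval avoids all of $u,v,z,w_1,w_2$, but the other intervals and the shifted sets $A^\pm$ may still contain them, and each such coincidence shaves a constant off the count. One must verify that the $\theta$-type correction (at most $2$, as in Lemma~\ref{lem3}) together with at most one more unit of loss from $w_1,w_2$ lying in a shifted set is not enough to break the pinch — i.e.\ the slack in $n_1 \leq 2k+2$ versus the lower bound is exactly absorbed, leaving $g = k-t+1$ as the unique possibility. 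I would organize this by first proving $g \leq k-t+1$ cleanly (it only needs $|A|,|B| \geq k-g+1$ and $r_1 \geq g$), then proving $g \geq k-t+1$ by the interval count, handling the special vertices there.
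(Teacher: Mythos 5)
You have the right two ingredients --- the degree/maximality bound $|A|\geqslant k-g+1$ and a count of the vertices lying in the segments of $C_{1}$ between consecutive vertices of $A$ --- but you have attached them to the wrong directions of the pinch, and the derivation you propose for the harder direction does not produce it. Since $A=B$ (Lemma~\ref{lem3}) forces $t=|A|$, the inequality $|A|\geqslant k-g+1$ reads $t\geqslant k-g+1$, i.e.\ $g\geqslant k-t+1$: this is the \emph{lower} bound on $g$, the direction the paper dismisses with ``clearly''. It cannot also yield $g\leqslant k-t+1$, and your substitute for that direction, $r_{1}+3(k-g+1)+(\text{shifts})\leqslant 2k+2$, simplifies with $r_{1}\geqslant g$ to $2g\geqslant k+1$ --- a bound containing no $t$ at all, hence useless for the claim.

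The upper bound $g\leqslant k-t+1$, equivalently $t\leqslant k-g+1$, is exactly the segment count you discarded as ``too weak''. It is weak only because you replaced $|A\cup B|$ by $3$; using $|A\cup B|=t$, together with $t-2$ segments of length at least $g$ (the segments containing $w_{1},w_{2}$ being exempt, contributing the ``$+2$'') and $r_{1}\geqslant g$, gives
\[
2k+2\geqslant n_{1}\geqslant g+t+(t-2)g+2=(g+1)(t-1)+3 .
\]
If $t\geqslant k-g+2$ this is at least $(g+1)(k-g+1)+3$, a concave function of $g$ equal to $3k>2k+2$ at both $g=2$ and $g=k-2$, while Corollary~\ref{cor1} and Lemma~\ref{lem3} confine $g$ to that range; contradiction. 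This is the paper's argument. As written, your plan stalls precisely at the step ``forcing $g\leqslant k-t+1$''.
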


\begin{proof}
Clearly, $g\geqslant k-t+1$. If $g\geqslant k-t+2$, by Lemma~\ref{lem3} and Corollary~\ref{cor1}, we have $2 \leqslant g \leqslant k - 2$. Thus, \[2k + 2 \geqslant V(C_{1}) + g \geqslant g(t - 2) + 2 + g + t \geqslant (g + 1)(t - 1) + 3 \geqslant (g + 1)(k - g + 1) + 3. \]

But since $f(g)=(g + 1)(k - g + 1) + 3$ is a concave function of $g$ and $f(2) = f(k - 2) = 3k > 2k + 2$, we have $f(g)> 2k+2$, a contradiction.
\end{proof}

Now, let $Q = {q_1}, {q_2},  \cdots , {q_g}$ be a maximal path in $R_{1}^{'}$ such that $t(Q)\geqslant 3$ and $A=B$. We  write ${X'} = A = B = \left\{ {x_1', x_2',  \cdots , x_t'} \right\}$.

Put $D{\rm{ = }}\left\{ {{S_i}, 1 \leqslant i \leqslant t} \right\}$, where ${{S_i}}$ is the set of vertices contained in the open segment of $C_{1}$ between two vertices of ${X'}$. Let $D'{\rm{ = }}\left\{ {{S_i^*}, i=1, 2} \right\}$ denote the element of $D$ which contains $w_{1}$ or $w_{2}$ (If $w_{1}$ and $w_{2}$ is contained in a same segment, let $D'=S^*$ ). Let $D^{''}=D-D'$.
The structure of $D$ has two cases:

Case (a): $w_{1}$ and $w_{2}$ is contained in a same segment $S^*$.

By Lemma~\ref{lem4}, we have
\[\begin{array}{l}
n_{1} \geqslant \left| {V(C_{1})} \right| + \left| {V(R_{1})} \right| \\
\ \ \ \ \geqslant g(t - 1) + 2 + (\left| {{S^*}} \right| - 2) + \sum\limits_{{S_i} \in {D^{''}}} {(\left| {{S_i}} \right| - g)}  + t + g + (r_{1} - g)\\
 \ \  \ \ \geqslant (g + 1)t + 2 + (\left| {{S^*}} \right| - 2) + \sum\limits_{{S_i} \in {D^{''}}} {(\left| {{S_i}} \right| - g)}  + (r_{1} - g)\\
 \ \ \ \ \geqslant (g + 1)(k - g + 1) + 2 + (\left| {{S^*}} \right| - 2) + \sum\limits_{{S_i} \in {D^{''}}} {(\left| {{S_i}} \right| - g)}  + (r_{1} - g).
\end{array}\]

Put $f'(g) = (g + 1)(k - g + 1) + 2$. Since $f'(g)$ is a concave function of $g$ with $f'(2) = 3k-1= f'(k - 2)$, we obtain a contradiction that \[2k + 2 \geqslant 3k - 1 + (\left| {{S^*}} \right| - 2) + \sum\limits_{{S_i} \in {D^{''}}} {(\left| {{S_i}} \right| - g)}  + (r_{1} - g). \]

Case (b):  $w_{1}$ is contained in $S_1^*$, and $w_{2}$ is contained in $S_2^*$.

By Lemma~\ref{lem4}, we have
\[\begin{array}{l}
n_{1} \geqslant \left| {V(C_{1})} \right| + \left| {V(R_{1})} \right| \\ \ \ \ \ \geqslant g(t - 2) + 2 + \sum\limits_{i = 1}^2 {(\left| {S_i^*} \right| - 1)}  + \sum\limits_{{S_i} \in {D^{''}}} {(\left| {{S_i}} \right| - g)}  + t + g + (r_{1} - g)\\
 \ \ \ \ \geqslant (g + 1)(t - 1) + 3 + \sum\limits_{i = 1}^2 {(\left| {S_i^*} \right| - 1)}  + \sum\limits_{{S_i} \in {D^{''}}} {(\left| {{S_i}} \right| - g)}  + (r_{1} - g)\\
 \ \ \ \ \geqslant (g + 1)(k - g) + 3 + \sum\limits_{i = 1}^2 {(\left| {S_i^*} \right| - 1)}  + \sum\limits_{{S_i} \in {D^{''}}} {(\left| {{S_i}} \right| - g)}  + (r_{1} - g).
\end{array}\]

Put $f''(g) = (g + 1)(k - g ) + 3$. When $2\leqslant g\leqslant k-3$, $f''(g)$ is a concave function of $g$ with $f''(2) = 3k-3= f''(k - 3)$, we have \[2k + 2 \geqslant 3k - 3 + \sum\limits_{i = 1}^2 {(\left| {S_i^*} \right| - 1)}  + \sum\limits_{{S_i} \in {D^{''}}} {(\left| {{S_i}} \right| - g)}  + (r_{1} - g). \]

 There is a contradiction when $k\geqslant 6$ from \[5-k \geqslant \sum\limits_{i = 1}^2 {(\left| {S_i^*} \right| - 1)}  + \sum\limits_{{S_i} \in {D^{''}}} {(\left| {{S_i}} \right| - g)}  + (r_{1} - g). \]
When $k=5$, we have $r_{1}=g$, $|S_i|=g$ for all $S_i \in D^{''}$ and $|S_i^*|=1$ for $i=1, 2 $. By Lemma~\ref{lem4}, we have $t=k-g+1=6-g$. For any elements $S_i$ and $S_j$ of $D^{''}$, we have $e(S_i, S_j)=0$ because of the maximality of $C_{1}$.
Firstly, if there is some $q_{i} \ of \ Q-\{q_{1}\} $ such that  ${N_{C_{1}}}(q_{i})\cap S_i \neq \emptyset$ for some $S_i \in D^{''}$. By Lemma~\ref{lem4}, since $q_{i-1}q_{g} \in E(G_{1})$, then $Q' ={q_1}, {q_2},  \cdots , {q_{i-1}}, {q_g}, {q_{g-1}},  \cdots , {q_i}$ is a  path satisfying $(*)$ in $R_{1}^{'}$, which implies $2g+1\leqslant g$. So we have ${N_{C_{1}}}(Q)\subset X'$. Secondly, we have $ e(X', V(G_{1})-X')\leqslant kt=5t$.
Moreover, we also have  \[kt \geqslant e(V(G_{1})-X', X')\geqslant gt+(t-2)g(k-g+1)+4. \]

By Lemma 6 we deduce that \[(5-g)(6-g) \geqslant (4-g)g(6-g)+4. \]

Because $ 2\leqslant g \leqslant 3$ and $g$ is an integer, we have \[(5-g)(6-g) \leqslant (4-g)g(6-g)+4\] a contradiction.

If $g=k-2$, we have $t=3$. So there exists $x_i' \in {X'}$ such that $x_i^{'-} \notin \{w_{1}, w_{2}\}$ or $x_i^{'+} \notin \{w_{1}, w_{2}\}$. It is clearly that \[d_{G_{1}}(x_i^{' - }) \leqslant 2k + 2 - 2(k - 2) - 2 = 4\]
or\[d_{G_{1}}(x_i^{' + }) \leqslant 2k + 2 - 2(k - 2) - 2 = 4\]
a contradiction.

$\bf Case \ 3. $ $|R_{1}'|\geqslant k. $

By the assumption of connectivity and $\{u, v, z\}$ is not a cut-set, there exists $x{''} \in {N_{C_{1}}}(R_{1}^{'})$, such that $x^{''-} \notin \{w_{1}, w_{2}\}$. It is clearly that ${N_{G_{1}}}({x^{''-}}) \cap {R'} = \emptyset $, and at least two of $\{v, w_{1}, w_{2}\}$ cannot be adjacent to $x^{''-}$. It follows that \[d_{G_{1}}(x^{''-}) \leqslant 2k + 2 -k - 2-1 = k-1\] a contradiction.

These contradictions complete our proof in this part. We next discuss the cases of $k=3$ and $k=4$. Similarly, let $C$ be a longest cycle of $G$ containing $P$ and $R=G-C$. Clearly, $|C|\geqslant4$.
By Theorem~\ref{thm1}, we only need to discuss the cases that $2k-1\leqslant|V(G)|=n\leqslant 2k$.

When $k=3$, $5\leqslant n\leqslant 6$. If $n=5$, we consider the following two cases.

Case (a): $|C|=5$. Theorem~\ref{thm2} holds.

Case (b): $|C|=4$. Let $C=u v z x_{1}$. Then $R$ is an isolated vertex $v_{0}$. It is easy to see that there exist two consecutive vertices of $\{u, z, x_{1}\}$ which are adjacent to $v_{0}$. A contradiction of that $C$ is the longest cycle of $G$ containing $P$.

If $n=6$, we consider the following three cases.

Case (a): $|C|=6$. Theorem~\ref{thm2} holds.


Case (b): $|C|=5$. Let $C=u v z x_{1} x_{2}$. Then $R$ is an isolated vertex $v_{0}$. By assumption, we have ${N_C}(v_{0})=\{v, z, x_{2}\}$ or ${N_C}(v_{0})=\{u, v, x_{1}\}$. By symmetry, we consider the case of ${N_C}(v_{0})=\{v, z, x_{2}\}$. Since $ux_{1}\in E(G)$, there is a Hamiltonian cycle $C'=u, v, z, v_{0}, x_{2}, x_{1}, u$ containing $P$. Theorem~\ref{thm2} holds.

Case (c): $|C|=4$. Let $C=u v z x_{1}$.

Subcase (c1): $R$ contains an isolated vertex $v_{0}$. It is similar to that of the case(b) when $k=3$ and $n=5$.

Subcase (c2): $R$ contains no isolated vertex. So the vertices of $C$ are adjacent to $R$. This contradict with the assumption that $C$ is the longest cycle of $G$ containing $P$.

When $k=4$, $7 \leqslant n \leqslant 8$. If $n=7$, we consider the following four cases.

Case (a): $|C|=7$. Theorem~\ref{thm2} holds.

Case (b): $|C|=6$. Let $C=u v z x_{1} x_{2} x_{3}$. Then $R$ is an isolated vertex $v_{0}$. By assumption, we have ${N_C}(v_{0})=\{u, v, z, x_{2}\}$. If $x_{1}x_{3}\notin E(G)$, we have $x_{1}u, x_{1}v\in E(G)$ which makes ${d_G}(x_{3})\leqslant3$, a contradiction. So we have $x_{1}x_{3}\in E(G)$. There is a Hamiltonian cycle $C'=u, v, z, v_{0}, x_{2}, x_{1}, x_{3}, u$ containing $P$. Theorem~\ref{thm2} holds.

Case (c): $|C|=5$. Let $C=u v z x_{1} x_{2}$.

Subcase (c1): $R$ contains an isolated vertex $v_{0}$. It is easy to
see that there exist two consecutive vertices of $\{u, z, x_{1}, x_{2}\}$ which are adjacent to $v_{0}$, a contradiction.

Subcase (c2): $R$ contains no isolated vertex. Since $G$ is 4-regular graph, we have ${d_C}(R)\geqslant 6$ and ${N_C}(R)\geqslant 3$. When ${N_C}(R)=3$, by assumption, we have ${N_C}(R)=\{v, z, x_{2}\}$ or ${N_C}(R)=\{u, v, x_{1}\}$. By symmetry, we consider the case of ${N_C}(R)=\{v, z, x_{2}\}$ in which we have ${d_G}(u)\leqslant3$, a contradiction. When ${N_C}(R)\geqslant 4$, there exist two consecutive vertices of $\{u, z, x_{1}, x_{2}\}$ which are adjacent to $R$, a contradiction.

Case (d): $|C|=4$. Let $C=u v z x_{1}$. For every connected component $R'$ of $R$, ${N_C}(R')\geqslant 3$. Clearly, there exist two consecutive vertices of $\{u, z, x_{1}\}$ which are adjacent to $R'$, a contradiction.

If $n=8$, we consider the following five cases.

Case (a): $|C|=8$. Theorem~\ref{thm2} holds.

Case (b): $|C|=7$. Let $C=u v z x_{1} x_{2} x_{3} x_{4}$. Then $R$ is an isolated vertex $v_{0}$. By assumption, we have ${N_C}(v_{0})=\{u, v, z, x_{2}\}$, ${N_C}(v_{0})=\{u, v, z, x_{3}\}$, ${N_C}(v_{0})=\{u, v, x_{1}, x_{3}\}$ or ${N_C}(v_{0})=\{v, z, x_{2}, x_{4}\}$. By the same discussion as for n=7 when k=4, there is a Hamiltonian cycle containing $P$ in all cases.

Case (c): $|C|=6$. Let $C=u v z x_{1} x_{2} x_{3}$.

Subcase (c1): $R$ contains two isolated vertices $v_{0}$ and $v_{1}$. By assumption, we have ${N_C}(v_{0})={N_C}(v_{1})=\{u, v, z, x_{2}\}$, ${d_G}(x_{1})\leqslant3$, a contradiction.

Subcase (c2): $R$ is an edge $e=v_{0}v_{1}$. Since $G$ is a 4-regular graph, we have  ${d_C}(R)\geqslant 6$ and ${N_C}(R)\geqslant 3$. When ${N_C}(R)=3$, we have ${N_C}(v_{0})={N_C}(v_{1})$. By assumption, we have ${N_C}(R)=\{u, v, x_{1}\}$, ${N_C}(R)=\{v, z, x_{3}\}$, ${N_C}(R)=\{u, v, x_{2}\}$,
${N_C}(R)=\{v, z, x_{2}\}$, ${N_C}(R)=\{u, z, x_{2}\}$ or ${N_C}(R)=\{x_{1}, x_{3}, v\}$. In the discussion of all cases, either there is a contradiction of regularity, or there is a Hamiltonian cycle containing $P$.
When ${N_C}(R)\geqslant 4$, it is clear that there is no consecutive vertices of $\{u, z, x_{1}, x_{2}, x_{3}\}$ which are adjacent to $R$. So ${N_C}(R)=\{u, v, z, x_{2}\}$.
We claim $x_{1}x_{3}\in E(G)$, and then there is a Hamiltonian cycle $C'=u, v, z, v_{1}, v_{0}, x_{2}, x_{1}, x_{3}, u$ containing $P$. Otherwise, $ux_{1}, vx_{1}\in E(G)$ which makes ${d_G}(x_{3})\leqslant3$, a contradiction.

Case (d): $|C|=5$. It is similar to that of the case(c) when $k=4$ and $n=7$.

Case (e): $|C|=4$. Let $C=u v z x_{1}$. Obviously, $R$ contains no isolated vertex.
Let $R'$ be a connected component of $R$. If ${N_C}(R')\geqslant 3$, it is clear that there exist two consecutive vertices of $\{u, z, x_{1}\}$ which are adjacent to $R'$, a contradiction. If ${N_C}(R')= 2$, we have ${N_C}(R)=\{v, x_{1}\}$, which makes ${d_G}(u)\leqslant3$, a contradiction.

Thus, we complete the proof.

\end{document}